\theoremstyle{plain}
\newtheorem{theorem}{Theorem}[section]
\newtheorem{lemma}[theorem]{Lemma}
\newtheorem{claim}[theorem]{Claim}
\theoremstyle{definition}
\newtheorem{exa}[theorem]{Example}
\newtheorem{con}[theorem]{Conjecture}
\newtheorem{question}[theorem]{Question}
\theoremstyle{remark}
\newcommand{\N}{\mathbb{N}}
\newcommand{\Z}{\mathbb{Z}}
\newcommand{\R}{\mathbb{R}}
\newcommand{\F}{\mathcal{F}}
\newcommand{\G}{\mathcal{G}}
\newcommand{\s}{\mathcal{S}}
\newcommand{\h}{\mathcal{H}}
\newcommand{\W}{\mathcal{W}}
\newcommand{\WW}{\mathbf{W}}
\newcommand{\w}{\mathbf{w}}
\DeclareMathOperator{\conv}{conv}
\DeclareMathOperator{\gap}{Gap}
\newcommand{\vre}{\varepsilon}
\newcommand{\dist}{\operatorname{dist}}
\newcommand{\pos}{\operatorname{Pos}}
\newcommand{\ignore}[1]{}
\newcommand{\df}{{\, \stackrel{\mathrm{def}}{=}\, }}
\newenvironment{subproof}[1][\proofname]{%
  \begin{proof}[#1]%
}{%
  \end{proof}%
}
\begin{document}

\title{Attractors of sequences of iterated function systems}

\title{Attractors of sequences of iterated function systems}
\author{Ryan Broderick} 

\maketitle

\begin{abstract}
If $\F$ and $\G$ are iterated function systems, then any infinite word $\WW$ in the symbols $\F$ and $\G$
induces a limit set. It is natural to ask whether this Cantor set can also be
realized as the limit set of a single iterated function system $\h$. We prove that 
if $\F$, $\G$, and $\h$ consist of $C^{1+\alpha}$ diffeomorphisms, then under
some additional constraints on $\F$ and $\G$ the answer is no.
This problem is motivated by the spectral theory of one-dimensional quasicrystals.
\end{abstract}

\section*{Aknowledgements}

The author thanks Anton Gorodetski for introducing him to this problem and for helpful discussions.
The author also wishes to thank the referee, whose careful reading greatly improved the quality
of the paper.

\section{Introduction}
If $D$ is a finite union of closed intervals,
then any family $\F$ of continuous, contracting functions $f_1, \dots, f_k \colon D \to D$
with disjoint images
defines a function on the compact subsets of $D$
via $F(K) = \bigcup_{i=1}^k f_i(K)$.\footnote{Throughout we will denote such a family by a caligraphic letter
and denote the induced function on compact sets by the corresponding standard letter.}
We define $C(\F)$ to be the compact set $\bigcap_{n=0}^\infty F^{n}(D)$.
This is the \emph{dynamically defined Cantor set} associated to $\F$.
We also call $\F$ an \emph{iterated function system}, or \emph{IFS}, and refer to $C(\F)$ as the \emph{limit set} of $\F$.
If $\F$ and $\G$ are two IFSs and $\mathbf{W} = (\W_n)_{n\in \N} \in \{\F, \G\}^{\N}$,
then we write $C(\WW) = \bigcap_{n=1}^\infty (W_1 \circ \dots \circ W_n)(D)$.
We will also write $C_n(\WW) = (W_1 \circ \dots \circ W_n)(D)$
and $C_n(\F) = F^n(D)$.
We say that $\F$ is a $C^{1+\alpha}$-diffeomorphic IFS if every function in $\F$ is 
a $C^{1+\alpha}$ diffeomorphism.

One-dimensional IFSs are well-studied, but relatively little is known about the Cantor sets
that may arise as their attractors. In \cite{McD}, it is shown that for a $C^1$-diffeomorphism IFS on the
circle, ratios of gap lengths accumulate at $1$, so for example the ternary Cantor set cannot arise.
For $C^{1+\alpha}$-diffeomorphic IFSs, it is known that the attractor must have Lebesgue measure $0$,
whereas this does not hold in the $C^1$ case. (See \cite{B}.)
See also \cite{DKN1, DKN2} for further results.
We are interested in the following question: given $\F$, $\G$, and $\WW \in \{\F,\G\}^{\N}$, does
there exist an IFS $\h$
such that 
$C(\WW) = C(\h)$?

This question was initially motivated by the spectral theory of one-dimensional quasicrystals.
Namely, consider the discrete Schr{\" o}dinger operator $H \colon \ell^2(\Z) \to \ell^2(\Z)$ given by
\begin{equation}
\label{Schr-operator}
(Hu)(n) = w_{n+1} u_{n+1} + w_n u_{n-1},
\end{equation}
where the sequence $(w_n)_{n\in \Z}$ is called the potential.
This is the off-diagonal case, which ensures that the spectrum of $H$ is symmetric around $0$; see Proposition 2.3 in \cite{Tak}.
See the appendix to \cite{DG} for more details on diagonal and off-diagonal operators.
We will consider the case that $(w_n)$ is a Sturmian sequence, i.e.
\[ 
w_n = \chi_{[0, 1-\alpha)}(\alpha n \text{ (mod } 1)),
\]
where $\alpha \in [0,1)$ is an irrational number and $\lambda > 0$ is called the coupling constant.
It is known that the spectrum of (\ref{Schr-operator}) in this case is a Cantor set of zero Lebesgue measure.

Let $[a_1, a_2, \dots]$ be the continued fraction expansion of $\alpha$, i.e.
\[
\alpha = \dfrac{1}{a_1 + \dfrac{1}{a_2 + \ddots}}.
\]
It is known (see \cite{Mei}, \cite{Gir}) that if the sequence $(a_n)_{n \in \N}$ is eventually periodic, then the spectrum of $H$
is a dynamically defined Cantor set. This has enormous consequences, e.g. see \cite{Mei} for the case $\alpha = [1, 1, 1, \dots]$.
On the other hand, explicit examples when the spectrum of $H$ is not a dynamically defined Cantor set are known \cite{LPW}.
Therefore, it is natural to ask whether eventual periodicity of the partial quotients of $\alpha$ is not only sufficient but also necessary
for the spectrum of $H$ to be a dynamically defined Cantor set.
\begin{con}
The spectrum of $H$ as in \ref{Schr-operator} is a dynamically defined Cantor set if and only if $\alpha$ has an eventually periodic continued fraction expansion.
\end{con}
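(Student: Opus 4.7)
\vspace{1em}

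\noindent\textbf{Proof proposal.} My plan is to reduce the conjecture to the non-realizability theorem announced in the abstract. The ``if'' direction is the content of \cite{Mei, Gir}, so the remaining task is to show that if the continued fraction expansion of $\alpha$ is not eventually periodic, then the spectrum $\Sigma$ of $H$ is not the limit set of any $C^{1+\beta}$-diffeomorphic IFS (for any $\beta > 0$, here chosen distinct from the irrational $\alpha$ above).

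First I would realize $\Sigma$ as $C(\WW)$ in the sense of this paper. The trace-map renormalization formalism for Sturmian Hamiltonians produces, for each coupling $\lambda > 0$, two IFSs $\F_\lambda$ and $\G_\lambda$ consisting of real-analytic diffeomorphisms on a finite union $D$ of closed intervals, together with a word $\WW(\alpha) \in \{\F_\lambda, \G_\lambda\}^{\N}$ encoded by the continued fraction digits $(a_k)$, such that $\Sigma = C(\WW(\alpha))$. In this coding, each partial quotient $a_k$ contributes a run of $a_k$ consecutive copies of $\F_\lambda$ or $\G_\lambda$ in alternation, so $\WW(\alpha)$ is eventually periodic if and only if $(a_k)$ is; in particular non-eventual periodicity of $\alpha$ transfers to non-eventual periodicity of $\WW(\alpha)$.

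Next I would invoke this paper's main theorem with $\F_\lambda$, $\G_\lambda$, $\WW(\alpha)$, and any candidate IFS $\h$ satisfying $C(\h) = \Sigma$. Since $\WW(\alpha)$ is not eventually periodic, the theorem should rule out the existence of such $\h$ and complete the necessity direction.

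The hard part will be verifying the ``additional constraints on $\F$ and $\G$'' imposed by the main theorem. Smoothness of $\F_\lambda$ and $\G_\lambda$ is automatic from analyticity of the trace map, but the renormalization lives naturally on the Fricke--Markov surface $S_\lambda \subset \R^3$, so one must project to a transversal $D \subset \R$ while preserving the diffeomorphism and disjoint-image properties. Uniform hyperbolicity of the trace map on its nonwandering set, established in the spectral-theory literature (e.g.\ \cite{DG}) for small and large coupling, should translate into whatever bounded-distortion and gap-regularity estimates the main theorem requires. For intermediate $\lambda$ the uniform-hyperbolicity picture is less complete, so this approach may initially yield the conjecture only in those coupling regimes where the underlying renormalization dynamics is well controlled.
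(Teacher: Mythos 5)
The statement you are addressing is an open \emph{conjecture}, not a theorem: the paper does not prove it, and explicitly says only that its results ``offer support'' for it. So there is no paper proof to compare against, and your proposal is really an outline for closing an open problem. The strategy you describe---realize $\Sigma$ as $C(\WW(\alpha))$ for a pair of renormalization IFSs $\F_\lambda,\G_\lambda$ driven by the continued-fraction digits of $\alpha$, observe that non-periodicity of $(a_k)$ forces non-periodicity of $\WW(\alpha)$, then invoke Theorem \ref{gen-bdd-deriv} or \ref{gen-endpts}---is exactly the reduction the author has in mind when offering the theorems as ``support,'' so you have correctly identified the intended bridge.

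However, the gap you flag at the end is not a technical nuisance but the whole obstruction, and your proposal does not close it. The main theorems require either (i) $\F$ and $\G$ symmetric about the center of $I$ with all derivatives in $(0,1/2)$, or (ii) $\conv(F(I))=\conv(G(I))=I$, together with $C(\F)\neq C(\G)$. None of these follows from uniform hyperbolicity of the trace map; they are extra structural constraints on the one-dimensional renormalization maps that, to my knowledge, have not been verified for any Sturmian coupling regime, and there is no argument in your proposal that verifies them. Moreover, even before one gets to those hypotheses, the step ``project the trace-map dynamics on the Fricke--Markov surface to a transversal $D\subset\R$ so that $\Sigma=C(\WW(\alpha))$ for one-dimensional IFSs with disjoint images in the precise sense of this paper'' is itself a substantial claim that you assert rather than establish. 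Finally, note a mismatch in scope: the conjecture speaks of ``dynamically defined Cantor sets,'' which in this paper's definition allows merely continuous contractions, whereas the theorems only exclude $C^{1+\beta}$-diffeomorphic $\h$, so even a complete reduction would prove a restricted version of the conjecture. In short, your plan is the right shape but leaves the decisive hypotheses unverified, which is precisely why this remains a conjecture rather than a corollary of the paper.
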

The results of this paper offer support for this conjecture.

We will show that under certain assumptions on IFSs $\F$ and $\G$, 
we have $C(\WW) = C(\h)$ for some $C^{1+\alpha}$-diffeomorphic system $\h$
only in the trivial
case that $\WW$ is eventually periodic. In the case that $\WW$ is not eventually periodic we will show
that $C(\WW)$ is different from a given $C(\h)$
by considering the gaps of each set. 
Given a compact set $K \subset \R$, a \emph{gap} of $K$ is an interval
$(x,y)$ such that $\{x, y\} \subset K$, $x <  y$, and $(x,y) \cap K = \varnothing$.
In other words, a gap of $K$ is a connected component of $\conv(K) \setminus K$,
where, here and throughout the paper, $\conv(\cdot)$ denotes the convex hull.

Our main theorem concerns the case where $\F$ and $\G$ are symmetric. We say an IFS $\F$ defined on
an interval $I = B(c, r)$ is \emph{symmetric} if for each $f \in \F$ there exists $\overline{f} \in \F$
such that $\overline{f}(x) - c = c - f(x)$. In particular, for each $n$ the set $C_n(\F)$ is symmetric around the center
of $I$.

\begin{theorem}
\label{gen-bdd-deriv}
Let $\F$ and $\G$ be symmetric, $C^{1+\alpha}$-diffeomorphic IFSs defined on the same closed interval $I$.
Suppose $C(\F) \neq C(\G)$ and $f'(x) \in (0,1/2)$ for every $f \in \F \cup \G$ and every $x \in I$.
Let $\WW = (\W_n) \in \{\F,\G\}^\N$
and suppose there exists a $C^{1+\beta}$-diffeomorphic IFS $\h$ such that $C(\WW) = C(\h)$. 
Then $\WW$ is eventually periodic.
\end{theorem}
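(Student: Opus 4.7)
My plan is to extract the sequence $\WW$ from the gap geometry of $C := C(\WW) = C(\h)$ and then to confront this infinite-data encoding with the finite alphabet of $\h$ to force $\WW$ to be eventually periodic.

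As standard preparation I record two facts. The $C^{1+\alpha}$ hypothesis together with $f' < 1/2$ yields a uniform bounded distortion constant $K > 1$: for every composition $\phi$ of maps drawn from $\F\cup\G$ or from $\h$, $\phi'(x)/\phi'(y) \le K$ on the domain. A short induction on $n$ from the symmetry hypothesis shows each $C_n(\WW)$, and hence $C$, is symmetric about the center $c$ of $I$; so $C(\h)$ inherits symmetry even though $\h$ need not be.

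The core is a decoding lemma. Since $C(\F) \ne C(\G)$, there is a minimal level $N$ at which the finite approximations $C_N(\F)$ and $C_N(\G)$ differ as subsets of $I$. I would build from this a geometric test $T$ on Cantor sets, expressed in terms of \emph{ratios} of gap lengths at the top $N$ levels of the cylinder partition --- ratios so that common distortion factors cancel. The test has the properties that (i) $T(C(\WW))$ recovers $\W_1$ and (ii) $T$ is invariant under $K$-bi-Lipschitz reparametrization. By bounded distortion, the restriction of $C$ to any $\WW$-cylinder of depth $n$ is $K$-bi-Lipschitz equivalent (after rescaling) to $C(\sigma^n \WW)$, so applying $T$ at that scale extracts $\W_{n+1}$; iterating decodes the full word $\WW$ from $C$.

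For the contradiction, I would exploit the self-similarity of $C(\h)$. For each composition $h = h_{i_1}\circ\cdots\circ h_{i_m}$ in $\h$, the set $h(C) = C \cap h(I)$ is $K$-bi-Lipschitz equivalent to $C$, so the decoder applied to a renormalization of it returns $\WW$. On the other hand, $h(I)$ is contained in some smallest $\WW$-cylinder $J_n$ of depth $n = n(h)$, where $n(h) \to \infty$ as $m \to \infty$; by bounded distortion, $|h(I)|/|J_n| \ge c_0$ for a fixed $c_0 > 0$. Running the decoder inside $J_n$ returns $\sigma^n \WW$. Comparing the two decodings of the same small piece of $C$ forces $\WW$ and $\sigma^n \WW$ to agree past a bounded initial segment, whose length depends only on $K$, $c_0$, and $N$. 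Pigeonholing on these bounded-length initial segments --- of which there are only finitely many over the 2-letter alphabet $\{\F,\G\}$ --- as $n$ ranges through an unbounded set yields two depths $n < n'$ with $\sigma^n\WW$ and $\sigma^{n'}\WW$ sharing a tail, so $\WW$ is eventually periodic.

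The technical heart is the construction of $T$. The theorem imposes no quantitative lower bound on how different $C_N(\F)$ and $C_N(\G)$ must be, so $T$ has to separate them using an invariant that is \emph{exactly} invariant under $K$-bi-Lipschitz reparametrization --- likely a ratio of ratios of gap lengths within a common cylinder, where the distortion factors cancel identically. Producing such an invariant from $C(\F) \ne C(\G)$ alone, and calibrating its sensitivity against the $C^{1+\alpha}$ distortion at the relevant scale, is where most of the work will lie.
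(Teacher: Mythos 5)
Your high-level intuition — use self-similarity of $C(\h)$ to compare $C$ against rescaled copies of itself, decode $\WW$ from the gap geometry, and pigeonhole — is the right shape of argument, and it is roughly what the paper does. But the proposal has a genuine gap, concentrated exactly where you flag the ``technical heart.''

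The decoder $T$ you describe is supposed to satisfy two conflicting requirements: it must separate $C(\F,\dots)$ from $C(\G,\dots)$ using only the hypothesis $C(\F)\neq C(\G)$, yet be \emph{exactly} invariant under $K$-bi-Lipschitz reparametrization. These cannot both hold: $K$-bi-Lipschitz maps form far too large a class (they can scale ratios of gap lengths by factors up to $K^2$), and since the hypothesis gives no quantitative lower bound on how $C(\F)$ and $C(\G)$ differ, a $K$-bi-Lipschitz homeomorphism could in principle carry an arbitrarily small initial discrepancy onto agreement. What actually saves the day in the paper is much finer than boundedness: the renormalizations $\Phi_\w$ of two cylinders with the \emph{same long prefix} converge (in $C^{1+\alpha'}$, uniformly) to the \emph{same} diffeomorphism $\Phi$, so one can compare the two tails through a common $\Phi$ without ever needing a universal decoder. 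The paper also isolates this in Lemma~\ref{diff-start}: two tails of a non-eventually-periodic $\WW$ with different first symbols have different limit sets, precisely because $C(\F)\neq C(\G)$ and the endpoints are preserved. Your plan would need that same nontrivial lemma, or a replacement for it, and instead assumes the decoder exists.

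Two further issues. First, the step ``running the decoder inside $J_n$ returns $\sigma^n\WW$; decoding $h(C)$ returns $\WW$; compare the two'' conflates two different cylinder structures living inside $C$ at small scale (the $\WW$-cylinders versus the $\h$-cylinders). The gaps of $C$ are common to both, and the paper's argument works gap-by-gap (iterating a fixed $h_1$ fixing an endpoint, and matching $h_1^m(I_0)$ against $\WW$-cylinders via pigeonhole); comparing whole renormalized cylinders does not line up. Second, the symmetry hypothesis and the bound $f'\in(0,1/2)$ are not used where they are actually needed. You use symmetry only to note $C$ is symmetric, and cite $f'<1/2$ as a source of bounded distortion, but bounded distortion follows from $C^{1+\alpha}$ and contraction alone. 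In the paper, both hypotheses are essential only for the reduction (Lemma~\ref{preserve-endpoints}) to the case where $\F$ and $\G$ fix the endpoints of $I$ — symmetry identifies $a'=w_L(1-L')$ etc.\ and $f'<1/2$ produces the quantitative separation $\beta_0\lambda^{k_0}$ of the left endpoints. Without that reduction (or an alternative route to $\conv(F(I))=\conv(G(I))=I$), Lemma~\ref{diff-start} is unavailable and the endgame does not close.
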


It will be convenient to first prove an altered version from which our main theorem will follow:

\begin{theorem}
\label{gen-endpts}
Let $\F$ and $\G$ be $C^{1+\alpha}$-diffeomorphic IFSs defined on the same closed interval $I$
and suppose $\conv(F(I)) = \conv(G(I)) = I$ and $C(\F) \neq C(\G)$.
Let $\WW = (\W_n) \in \{\F,\G\}^\N$ and suppose there exists a $C^{1+\beta}$-diffeomorphic 
IFS $\h$ such that $C(\WW) = C(\h)$. 
Then $\WW$ is eventually periodic.
\end{theorem}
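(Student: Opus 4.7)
The plan is to use bounded distortion to match the cylinder trees of $C(\WW)$ and $C(\h)$, and then extract combinatorial rigidity from the hypothesis $C(\F) \neq C(\G)$. First I would establish the standard $C^{1+\alpha}$ bounded distortion estimate: a constant $K \geq 1$ such that for every composition $w = w_1^{j_1}\circ\cdots\circ w_n^{j_n}$ of maps drawn from $\F \cup \G$ and every $x,y \in I$ one has $w'(x)/w'(y) \leq K$, and likewise for all compositions from $\h$. Consequently, if $\psi_J\colon I \to U_J := w(I)$ is the affine rescaling of a level-$n$ cylinder of $C(\WW)$, then $\psi_J^{-1}(C(\WW) \cap U_J)$ is the image of $C(\sigma^n\WW)$ under a $K$-quasi-affine $C^{1+\alpha}$ diffeomorphism of $I$, and the family of such diffeomorphisms as $J$ varies is precompact in the $C^1$ topology. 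Analogously, rescaling a cylinder $H_K$ of $C(\h)$ presents $C(\h) \cap H_K$ as a $K$-quasi-affine $C^{1+\beta}$-image of $C(\h)$.

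Next I would exploit the condition $\conv(F(I)) = \conv(G(I)) = I$: the two endpoints of $I$ belong to $C(\F) \cap C(\G)$, hence to $C(\WW) = C(\h)$, and by induction the two endpoints of every cylinder $U_J$ of $C(\WW)$ lie in $C(\h)$. For each level-$n$ cylinder $U_{J(n)}$, let $H_{K(n)}$ be the minimal cylinder of $C(\h)$ whose convex hull contains $U_{J(n)}$; the distortion bound for $\h$ forces $|H_{K(n)}|/|U_{J(n)}|$ to stay bounded. Rescaling $U_{J(n)}$ affinely to $I$ then exhibits the same Cantor set in two ways: as a quasi-affine $C^{1+\alpha}$-image of $C(\sigma^n\WW)$, and (after a further bounded renormalization inside $H_{K(n)}$) as a quasi-affine $C^{1+\beta}$-image of $C(\h)$. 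Along any subsequence $n_k \to \infty$, precompactness of the renormalization family yields Hausdorff convergence $C(\sigma^{n_k}\WW) \to C(\h)$ up to a uniformly bounded bilipschitz error.

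Finally I would argue combinatorially. Since $C(\F) \neq C(\G)$, their cylinder trees disagree at some finite depth $\ell$, so once the bilipschitz error from distortion is absorbed one can still read off from the local structure of $C(\sigma^n\WW)$ near depth $\ell$ whether $\W_{n+1} = \F$ or $\W_{n+1} = \G$. Iterating this down through deeper levels, the pair consisting of the current cylinder of $\h$ at a matching scale and the next symbol $\W_{n+1}$ takes only finitely many values; by pigeonhole and the self-similarity of $C(\h)$, this produces a return time at which the whole renormalization repeats exactly, giving eventual periodicity of $\WW$. The central obstacle is this last step: converting the \emph{approximate} asymptotic equivalence between $\{C(\sigma^n\WW)\}$ and $C(\h)$ into an \emph{exact} recurrence of symbols requires the $K$-bounds to tighten as the scale refines, which is precisely where the $C^{1+\alpha}$ and $C^{1+\beta}$ Hölder regularity of the derivatives is essential.
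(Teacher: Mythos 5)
Your outline correctly identifies bounded distortion and renormalization as the central tools, and you honestly flag the final pigeonhole-to-exact-recurrence step as the obstacle; but that obstacle is a genuine gap, and Hölder tightening of the distortion constants alone will not close it. The pigeonhole step on continuous renormalization data only yields two times $n_1 < n_2$ where the rescaled tails $C(\sigma^{n_1}\WW)$ and $C(\sigma^{n_2}\WW)$ are $\varepsilon$-close after composing with nearby diffeomorphisms; to conclude eventual periodicity you must upgrade this to \emph{equality} of the symbol tails $(\W_{n_1+1},\dots)$ and $(\W_{n_2+1},\dots)$. That upgrade requires a rigidity statement to the effect that the map $\WW' \mapsto C(\WW')$ is injective when $C(\F) \neq C(\G)$ and $\conv(F(I))=\conv(G(I))=I$. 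This is precisely Lemma~\ref{diff-start} in the paper, proved by a maximal-gap argument using the endpoint-preservation hypothesis, and nothing in your proposal plays this role. Without it, ``reading off the next symbol from the local structure near depth $\ell$'' does not propagate to infinitely many symbols, and the pigeonhole cannot be made exact. A second missing ingredient: even granting injectivity, you must ensure the two selected times give genuinely \emph{distinct} tails (otherwise there is no contradiction or forced recurrence). The paper's $(k,J)$-compatible bookkeeping exists specifically to handle the case where the tails accidentally agree for long initial segments, pushing the argument one step deeper and invoking aperiodicity of $\WW$.

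There is also a structural mismatch worth noting. You compare each cylinder $U_{J(n)}$ of $C(\WW)$ with a minimal covering cylinder $H_{K(n)}$ of $C(\h)$, which introduces an unavoidable bounded-bilipschitz offset between the two rescalings that you would then have to control uniformly. The paper sidesteps this by choosing a single branch $h_1 \in \h$ fixing an endpoint of $I$, iterating it on one fixed gap $I_0$ to produce a nested family of gaps $h_1^m(I_0)$ that live simultaneously and canonically in both the $\h$-structure and the $\WW$-cylinder tree; the comparison between consecutive scales is then affine to leading order in $\gamma = h_1'(0)$, and the combinatorics of $\WW$ can be extracted directly from how these gaps sit in basic intervals. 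That construction, together with Lemma~\ref{diff-start} applied in the limit $k,i_1 \to \infty$, is what makes the approximate-to-exact passage work.
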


Note that in Theorem \ref{gen-endpts} the symmetry assumption and the bounds on the derivatives of functions in $\F$ and $\G$ have been removed, 
but we impose the additional condition that $\F$ and $\G$ preserve the endpoints
of $I$ in the sense that $\min C(\F) = \min C(\G) = \min I$ and $\max C(\F) = \max C(\G) = \max I$.
We will deduce Theorem \ref{gen-bdd-deriv} from Theorem \ref{gen-endpts} by showing that
the hypotheses of Theorem \ref{gen-bdd-deriv} imply this additional condition. 
(See Lemma \ref{preserve-endpoints}.)

\section{Proof of Theorem \ref{gen-endpts}}

The proof of Theorem \ref{gen-endpts} uses the fact that $C(\h)$
has approximately the same structure in basic intervals of arbitrarily small scale, while $C(\WW)$
will have different structures at these same scales.
To this end we first prove the following lemma.

\begin{lemma}
\label{diff-start}
Let $\F$ and $\G$ be injective IFSs defined on the same interval $I$
satisfying $\conv(F(I)) = \conv(G(I)) = I$,
and suppose that for two distinct words $\WW, \WW' \in \{\F, \G\}^\N$,
$C(\WW) = C(\WW')$. Then for any $\WW'', \WW''' \in \{\F,\G\}^\N$, we have $C(\WW'') = C(\WW''')$.
In particular, $C(\F) = C(\G)$.
\end{lemma}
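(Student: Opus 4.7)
The plan is to reduce via a shifting argument to the case where the first letters of $\WW$ and $\WW'$ already disagree, and then use the convex-hull hypothesis to force the attractors of $\F$ and $\G$ to coincide; once $C(\F) = C(\G)$ the full conclusion follows routinely.

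For the reduction, suppose $\W_i = \W_i'$ for $i < n$ and $\W_n \neq \W_n'$. Because the basic intervals $(f_1 \circ \cdots \circ f_{n-1})(I)$ indexed by chains $(f_i) \in \W_1 \times \cdots \times \W_{n-1}$ are pairwise disjoint, the equation $C(\WW) = C(\WW')$ restricts on each branch to the equality of $(f_1 \circ \cdots \circ f_{n-1})$ applied to the Cantor sets of the suffixes $(\W_n, \W_{n+1}, \ldots)$ and $(\W_n', \W_{n+1}', \ldots)$. Injectivity of the prefix map cancels to give equality of these suffix Cantor sets, so one may assume $\W_1 \neq \W_1'$, say $\W_1 = \F$ and $\W_1' = \G$; setting $K = C((\W_2, \W_3, \ldots))$ and $L = C((\W_2', \W_3', \ldots))$, we obtain $F(K) = G(L)$.

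Using $\conv(F(I)) = \conv(G(I)) = I$ and induction on $n$, $\conv(C_n(\WW'')) = I$ holds for every word $\WW''$ and every $n$, so $\min I, \max I \in C(\WW'')$. Applied along the branches of any word, this places every endpoint of every basic interval $f_i(I)$ and $g_j(I)$ inside $C(\WW) = F(K) = G(L) \subseteq F(I) \cap G(I)$. The main step is to turn this endpoint data into the equality $C(\F) = C(\G)$: every top-level gap of $\F$ is a gap of $G(L)$, hence either a top-level gap of $\G$ or strictly contained in some $g_j(I)$, and the constraint that each endpoint of $f_i(I)$ lies in $G(I)$ (combined with the symmetric statement swapping $\F$ and $\G$) forces the first alternative, giving $F(I) = G(I)$. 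Matching the basic intervals pairwise as $f_i(I) = g_{\sigma(i)}(I)$ then yields $f_i(K) = g_{\sigma(i)}(L)$ on each piece; iterating the analysis at deeper scales identifies $C(\F)$ with $C(\G)$.

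Once $C(\F) = C(\G)$, this common set is invariant under both $F$ and $G$; together with the fact that the diameters of basic intervals shrink to $0$, this forces $C(\WW'')$ to equal the common invariant set for every word $\WW''$, completing the proof. I expect the main obstacle to be the step forcing $F(I) = G(I)$: without the additional derivative bounds imposed in Theorem~\ref{gen-bdd-deriv} (which are not assumed here), top-level and internal gaps are not distinguishable by size alone, so the argument must crucially exploit the precise positions of the level-$1$ endpoints within $F(I) \cap G(I)$.
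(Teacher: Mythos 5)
Your opening reduction — using disjointness of the branch images and injectivity of the prefix $w_1 \circ \cdots \circ w_{k_0}$ to cancel down to the case $\W_1 \neq \W_1'$ — matches the paper's first step exactly, and is correct. After that point, however, your route diverges and contains a genuine gap. You want to deduce $F(I) = G(I)$ by arguing that a stage-$1$ gap of $\F$, being a gap of $G(L)$, cannot be strictly contained in some $g_j(I)$, citing the fact that the endpoints of each $f_i(I)$ lie in $G(I)$ and the symmetric statement. But nothing in that endpoint data rules out the second alternative: a stage-$1$ gap $A$ of $\F$ strictly inside $g_j(I)$ simply pulls back to a gap $g_j^{-1}(A)$ of $L$ at a deeper level, and the endpoint constraints are perfectly consistent with this. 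You acknowledge this yourself in your final sentence (``I expect the main obstacle to be the step forcing $F(I) = G(I)$''), which is a concession that the central step is asserted, not proved. Even if one could establish $F(I) = G(I)$, the subsequent ``iterating the analysis at deeper scales'' is an unproved induction whose hypotheses at each level are not the same as those at the top.

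The paper avoids this difficulty entirely by not trying to match $\F$ and $\G$ level by level. Instead, after the reduction it forms the collection $\mathcal{A}$ of \emph{all} intervals that are a gap of $C(\WW'')$ for some word $\WW''$ but fail to be a gap of $C(\WW''')$ for some other word, and takes an element $A$ of maximal length. The key observation is dichotomous: if $A$ is a gap of some $C(\WW'')$ with $\W_1'' = \F$, either $A$ is a stage-$1$ gap of $\F$ (hence a gap of every word starting with $\F$), or $A = f(A')$ with $f \in \F$ a contraction and $A'$ a gap of the shifted word; then $|A'| > |A|$, so by maximality $A' \notin \mathcal{A}$, i.e.\ $A'$ is a gap of \emph{every} word, and again $A$ is a gap of every word starting with $\F$. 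Combining this with $C(\WW) = C(\WW')$ and $\W_1 = \F \neq \G = \W_1'$ shows $A$ is a gap of every word, contradicting $A \in \mathcal{A}$. This global quantification over all pairs of words, together with the maximal-length trick and contractivity, is exactly what replaces the unsubstantiated ``forces the first alternative'' step in your proposal.
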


\begin{proof}
Since $\WW$ and $\WW'$ are distinct, there is some minimal $k_0 \ge 0$ such that
$\W_{k_0+1} \neq \W_{k_0+1}'$.
But
\begin{eqnarray*}
C(\WW) = \bigcap_{n=0}^\infty (W_1 \circ \dots \circ W_n)(I) 
			&=& (W_1 \circ \dots \circ W_{k_0})\left( \bigcap_{n=k_0+1}^\infty W_{k_0+1} \circ \dots \circ W_n(I) \right)\\				&=& (W_1 \circ \dots \circ W_{k_0})\big(C(\W_{k_0+1}, \W_{k_0+2}, \dots)\big),
\end{eqnarray*}
so letting $K = C(\W_{k_0+1}, \W_{k_0+2}, \dots)$ and $K' = C(\W_{k_0+1}', \W_{k_0+2}', \dots)$, we have
$$(W_1 \circ \dots \circ W_{k_0})(K) = (W_1 \circ \dots \circ W_{k_0})(K').$$
Since the maps in $\F$ all have distinct images, as do the maps in $\G$, it follows that for each finite sequence
$(w_1, \dots, w_{k_0})$ with $w_i \in \W_i$, 
$$w_1 \circ \dots \circ w_{k_0}(K) = w_1 \circ \dots \circ w_{k_0}(K').$$
But $w_1 \circ \dots \circ w_{k_0}$ is a bijection onto its image, so we conclude $K = K'$.
Thus, we may assume without loss of generality that $k_0 = 0$ and
in particular that $\W_1 = \F$ and $\W_1' = \G$.

Let $\mathcal{A}$ be the collection of all intervals $A$ such that for some words $\WW'', \WW''' \in \{\F, \G\}^\N$,
$A$ is a gap of $C(\WW'')$ but not of $C(\WW''')$, and suppose, for a contradiction, that $\mathcal{A}$ is not empty.
Let $A$ be a gap of maximal length in $\mathcal{A}$.

If $A$ is a gap of $C(\WW'')$ for some $\WW''$ with $\W_1'' = F$, 
we consider two cases: the case that $A$ is a stage-$1$ gap,
i.e. a gap of $C_1(\WW'')$, and the case that it isn't. If $A$ is a stage-$1$ gap, then it
is a gap of $C_1(\F)$ and hence 
also a gap of every $\overline{\WW}$ with $\overline{\W}_1 = \F$. Otherwise, it must equal $f(A')$
for some $f \in \F$ and some gap $A'$ of $C(\W_2'', \W_3'', \dots)$. Since $f$ is a contraction,
by the maximality of $A$ we have that $A'$ is a gap of every $C(\overline{W})$. Hence, in this case,
as in the first case, we have that $A$ is a gap of every $C(\overline{\WW})$ with $\overline{\W}_1 = F$. 
The same argument shows that if $A$ is a gap of $C(\WW''')$ for some $\WW'''$ with $\W_1''' = \G$, 
then $A$ is a gap of every $C(\overline{\WW})$ with $\overline{\W}_1 = \G$.

Thus, if $A$ is a gap of some $C(\WW'')$ with $\W_1'' = \F$, then
it is a gap of every $C(\WW''')$ with $\W_1''' = \F$ and hence a gap of $C(\WW) = C(\WW')$. Since $\W_1' = \G$,
it follows that $A$ is also a gap of every $C(\WW''')$ with $\W_1''' = \G$, and therefore $A$ is a gap of
every $C(\WW''')$, contradicting $A \in \mathcal{A}$. It follows similarly that if $A$ is a gap of some
$C(\WW'')$ with $\W_1'' = G$ then $A$ is a gap of every $C(\WW''')$ again contradicting $A \in \mathcal{A}$.
Thus, no such $A$ can exist so the collection $\mathcal{A}$ must be empty.
\end{proof}

 Note that in the proof of Lemma \ref{diff-start} we use the fact that for any $k \in \N$, every gap of $C_k(\WW)$ 
 is a gap of $C(W)$.
 This does not generally hold if we remove the assumption that $\conv(F(I)) = \conv(G(I)) = \conv(I)$.

We also need the so-called Bounded Distortion Lemma (see \cite{Su}):
\begin{lemma}
\label{dist-lemma}
Let $I$ be a closed interval and
let $\F$ be any finite set of contracting $C^{1 + \alpha}$ diffeomorphisms $I \to I$.
Given a sequence $\mathbf{f} = (f_k) \in \F^\N$,
let $T^{(\mathbf{f}_k)}$ be the affine map sending $f_k \circ \dots \circ f_1 (I)$ onto $I$
such that $T^{(\mathbf{f}_k)} \circ f_k \circ \dots \circ f_1$ is orientation-preserving.
Then for any $0 < \alpha' < \alpha$ and any $(f_k)_{k=1}^\infty \in \F^\N$ the sequence $T^{(\mathbf{f}_k)} \circ f_k \circ \dots \circ f_1$
converges in the $C^{1+\alpha'}$ topology to a map 
$f \colon I \to I$. Furthermore, this convergence is uniform in the sequence $(f_k)$.
\end{lemma}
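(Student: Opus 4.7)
The plan is to establish the classical bounded distortion estimate for compositions of $C^{1+\alpha}$ maps and then promote it to $C^{1+\alpha'}$ convergence by interpolation. As a first step I extract the uniform constants available from finiteness of $\F$ and compactness of $I$: a contraction rate $\lambda \in (0,1)$ with $\sup_{f\in\F}\|f'\|_\infty \le \lambda$, a lower bound $m = \inf_{f\in\F,\,x\in I}|f'(x)| > 0$, and a Hölder constant $C_0$ such that $\log|f'|$ is $\alpha$-Hölder with constant $C_0$ for every $f \in \F$. Write $g_n = f_n \circ \dots \circ f_1$, $J_n = g_n(I)$, and $h_n = T^{(\mathbf{f}_k)} \circ g_n$; note that $\operatorname{diam}(J_n) \le \lambda^n|I|$ and that $h_n'(x) = |g_n'(x)|\cdot|I|/|J_n|$, which in particular is positive by the orientation-preserving choice of $T^{(\mathbf{f}_k)}$.

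Next I prove bounded distortion. The chain rule gives $\log|g_n'(x)| = \sum_{i=0}^{n-1}\log|f_{i+1}'(g_i(x))|$, and combining the Hölder bound on $\log|f_{i+1}'|$ with the contraction $|g_i(x)-g_i(y)| \le \lambda^i|x-y|$ yields, term by term and after summing the geometric series,
\[
\bigl|\log|g_n'(x)| - \log|g_n'(y)|\bigr| \le \frac{C_0}{1-\lambda^\alpha}|x-y|^\alpha,
\]
uniformly in $n$ and in $(f_k)$. Since $|J_n| = |g_n'(\xi_n)|\cdot|I|$ for some $\xi_n \in I$ by the mean value theorem, this translates into uniform upper and lower bounds on $h_n'$ and a uniform $\alpha$-Hölder bound on $\log h_n'$. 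For the Cauchy condition I use $h_{n+1}'(x)/h_n'(x) = |f_{n+1}'(g_n(x))|\cdot|J_n|/|J_{n+1}|$, and writing $|J_{n+1}|/|J_n| = |f_{n+1}'(\eta_n)|$ for some $\eta_n \in J_n$ converts this into $|f_{n+1}'(g_n(x))|/|f_{n+1}'(\eta_n)|$. The Hölder bound on $\log|f_{n+1}'|$ then gives $|\log(h_{n+1}'(x)/h_n'(x))| \le C_0(\lambda^n|I|)^\alpha$, which is summable, so $\log h_n'$, and hence $h_n'$, is uniformly Cauchy in $C^0$ at a geometric rate. Orientation-preservation forces $h_n(\min I) = \min I$, so integrating $h_n'$ upgrades this to $C^0$ convergence of $h_n$ itself.

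Finally, I upgrade to $C^{1+\alpha'}$ convergence. The uniform $\alpha$-Hölder seminorm of $h_n' - h_m'$ together with its vanishing sup norm produces convergence in the $\alpha'$-Hölder seminorm via the standard interpolation inequality $[u]_{\alpha'} \le K[u]_\alpha^{\alpha'/\alpha}\|u\|_\infty^{1-\alpha'/\alpha}$, valid for any $0 < \alpha' < \alpha$. Because every constant above depends only on $\F$, $\alpha$, and $I$, the convergence is uniform in the sequence $(f_k)$, as claimed. The main obstacle I anticipate is the bookkeeping in the distortion step: one must normalize $h_n$ so that the ratio $h_{n+1}'/h_n'$ factors through the derivative of the \emph{single} map $f_{n+1}$ evaluated at two nearby points in $J_n$, rather than expanding into an uncontrollable ratio involving the full composition, since it is precisely this factorization that produces the geometrically fast Cauchy estimate and makes interpolation applicable.
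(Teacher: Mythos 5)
Your proof is correct. The paper does not actually prove Lemma~\ref{dist-lemma}; it is cited to Sullivan [Su] and used as a black box, so there is no proof in the paper to compare against. Your argument is the standard one: the telescoping distortion estimate $\bigl|\log|g_n'(x)| - \log|g_n'(y)|\bigr| \le \tfrac{C_0}{1-\lambda^\alpha}|x-y|^\alpha$ gives a uniform $\alpha$-H\"older bound on $\log h_n'$, the factorization $h_{n+1}'/h_n' = |f_{n+1}'(g_n(x))|/|f_{n+1}'(\eta_n)|$ with both arguments in the interval $J_n$ of diameter $\le \lambda^n|I|$ gives a geometrically summable $C^0$ Cauchy estimate, and the interpolation inequality $[u]_{\alpha'} \le K[u]_\alpha^{\alpha'/\alpha}\|u\|_\infty^{1-\alpha'/\alpha}$ promotes this to $C^{1+\alpha'}$ convergence; integrating from $\min I$ (which is fixed by each $h_n$) handles the zeroth-order part, and since every constant depends only on $\F$, $\alpha$, and $I$, uniformity in $(f_k)$ is automatic. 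The one point worth stating explicitly rather than leaving implicit is that the uniform $\alpha$-H\"older bound and the uniform two-sided bounds on $h_n'$ together give a uniform $\alpha$-H\"older bound on $h_n'$ itself (not just on $\log h_n'$), which is what the interpolation step consumes; this follows immediately since $h_n' = e^{\log h_n'}$ and $\log h_n'$ is uniformly bounded, but the composition deserves a word.
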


Before preceding to the proof of Theorem \ref{gen-endpts}
let us introduce some notation that will be useful in what follows.
Through the remainder of this section we assume that $\F$, $\G$, $\h$, and $I$ are as in the statement
of Theorem \ref{gen-endpts}.
Given $k \in \N$ and a finite sequence $\mathbf{w} = (w_1, \dots, w_k)$ of maps
with $w_i \in \W_i$, we call $B_\w = w_1 \circ \dots \circ w_k(I)$ a \emph{stage-$k$ basic interval}
of $C(\WW)$. If $T_\mathbf{w}$ is the affine map such that 
$$\Phi_{\w} \df T_{\mathbf{w}} \circ w_1 \circ \dots \circ w_k $$
is orientation-preserving and satisfies $\Phi_{\w}(I) = (I)$,
then we call $T_\w$ the \emph{renormalization map} for this basic interval.

It is instructive to consider a simpler case which still contains the main ideas of the general argument. 
Suppose every map in $\F$, $\G$, and $\h$ is affine, increasing, and defined on $[0,1]$, and that
$\conv(C(\h)) = \conv(C(\F)) = \conv(C(\G)) = [0,1]$.
Then for any $\w = (w_1, \dots, w_k)$,
$\Phi_\w$ is simply the identity map.
Furthermore, there exists $\gamma > 0$ such $x \in C(\h)$ if and only if $\gamma x \in C(\h)$.
In particular, for some open interval $A$
the sets $\gamma^i A$ are all gaps of $C(\h)$.
If $C(\h) = C(\WW)$, then these are
of course also gaps of $C(\WW)$. But there is a finite set of intervals $A_1, \dots, A_m$
such that every gap of $C(\WW)$ is of the form $w_1 \circ \dots \circ w_k(A_j)$
for some $w_i \in \W_i$ and $1 \le j \le m$. 
Thus, by the pigeonhole principle there are two gaps $\gamma^{i_1} A$ and 
$\gamma^{i_2}A = \gamma^{i_2 - i_1} (\gamma^{i_1} A)$
that are images of the same interval $A_j$ ($1 \le j \le m$).
In particular, we may find $1 \le j \le m$, $\ell  = i_2 - i_1 \in \N$, $\w = (w_1, \dots, w_{k_1})$,
and $\w' = (w_1', \dots, w_{k_2}')$ (with $w_i \in \W_i$ and $w_i' \in \W_i$) such that 
$$w_1' \circ \dots \circ w_{k_2}'(A_j) = \gamma^\ell w_1 \circ \dots \circ w_{k_1} (A_j).$$
Since the maps $w_i$ and $w_i'$ are all affine, it follows that 
$$B_{\w'} = w_1' \circ \dots \circ w_{k_2}'[0,1]= \gamma^\ell w_1 \circ \dots \circ w_{k_1}[0,1] = \gamma^\ell B_{\w}.$$
Moreover, by our choice of $\gamma$ this implies $B_{\w'} \cap C(\W) = \gamma^{\ell}(B_{\w} \cap C(\W))$.
Since $w_i$ and $w_i'$ are both increasing, we also obtain $T_{\w'} = \frac{1}{\gamma^\ell} T_\w$, so
we have both
$$T_{\w'}(B_{\w'} \cap C(\WW)) = T_{\w'}(w_1' \circ \dots \circ w_{k_2}'(C(\W_{k_2 + 1}, \W_{k_2 + 2}, \dots)))
	= C(\W_{k_2 + 1}, \W_{k_2 + 2}, \dots)$$
and
$$T_{\w'}(B_{\w'} \cap C(\WW)) = T_\w(B_{\w} \cap C(\WW)) = C(\W_{k_1 + 1}, \W_{k_1 + 2}, \dots)$$
Hence, $C(\W_{k_2 + 1}, \W_{k_2 + 2}, \dots) = C(\W_{k_1 + 1}, \W_{k_1 + 2}, \dots)$.
But if $\WW$ is not eventually periodic then these two tails must be different, which contradicts
Lemma \ref{diff-start}.

The general argument is more complicated but the overall structure is similar.
The main difference is that the functions $\Phi_\w$ will no longer be equal to the identity function.
We will instead rely on an application of the Bounded Distortion Lemma, which forces us
to choose the tuples $\w$ and $\w'$ more carefully, so that the corresponding maps $\Phi_\w$
and $\Phi_{\w'}$ are close. 

\begin{proof}[Proof of Theorem \ref{gen-endpts}]
Translating and rescaling we may assume $I = [0,1]$.
Suppose $\WW$ is not eventually periodic and let $\h$ be a $C^{1+\beta}$-diffeomorphic 
IFS defined on an interval
$I'$ such that
$C(\WW) = C(\h)$.
We may assume without loss of generality that $\conv(H(I')) = I'$ so that in particular 
$I' = [0,1]$.
Indeed, if not then since $\min C(\h) = \min C(\WW) = 0$ and $\max C(\h) = \max C(\WW) = 1$
we may let $\overline{h} = h|_{[0,1]}$ and $\overline{\h} = \{\overline{h} \colon h \in \h\}$,
which satisfies $C(\h) = C(\overline{\h})$.
Let $h_1, h_2 \in \h$ be the maps whose images contain $0$ and $1$ respectively.
Note that we can also assume that 
either $h_1(0) = 0$ or $h_2(1) = 1$ since otherwise
we can consider $\h^2 = \{h \circ h' \colon h, h' \in \h\}$, which satisfies $C(\h^2) = C(\h)$
and contains $h_1 \circ h_2$ with $h_1(h_2(0)) = 0$.
Thus, reflecting both $C(W)$ and $C(\h)$ if necessary, 
we may assume that for any $x \in [0,1]$, 
	\begin{equation}
	\label{scale-down-points}
	x \in C(\h) \Leftrightarrow h_1(x) \in C(\h)
	\end{equation}
	for a map $h_1 \in \h$ satisfying $h_1(0) = 0$. Let $\gamma = h_1'(0) > 0$.

Let $\text{Gap}_0$ be the set of all gaps of $C_1(\F)$ and $C_1(\G)$.	
Fix a gap $I_0$ of $C_1(\h)$ and note that for each $k \in \N$, $h_1^k(I_0)$ is a gap of $C(\h) = C(\WW)$.
Hence, for each $m \in \N$, there exist $k_m$, $A_m \in \gap_0$, and $w^{(m)}_i \in \W_i$ ($1\le i \le k_m$)
such that
$$h_1^m(I_0) = w^{(m)}_1 \circ \dots \circ w^{(m)}_{k_m} (A_m)$$
Let $B_m = w^{(m)}_1 \circ \dots \circ w^{(m)}_{k(m)} [0,1]$,
i.e. $B_m$ is the basic interval of $C(\WW)$ that contains $h_1^m(I_0)$.
Note that by construction these are nested.
By the pigeonhole
	principle there exist infinitely many distinct pairs $i_1, i_2 \in \N$
	with $0 < i_2 - i_1 \le \# \text{Gap}_0 \times [\#(\F \cup \G)]^k$
	such that
	$A_{i_1} = A_{i_2}$ and $w_{k_{i_1} - i}^{(i_1)} = w_{k_{i_2} - i}^{(i_2)}$
	for $0 \le i \le k$ (and in particular $\W_{k_{i_1}} = \W_{k_{i_2}}$).
	Again using the pigeonhole principle we may assume that $T_{\w^{(i_1)}}$ and $T_{\w^{(i_2)}}$
	have the same orientation.
	Let us say that such a pair is \emph{$k$-compatible}.
	The strategy of the proof is to find distinct ``suffix'' sequences $\mathbf{S}^{(1)}$ and
	$\mathbf{S}^{(2)}$ in $\{\F, \G\}^\N$ and a ``prefix'' sequence of maps $\varphi_i \in \F \cup \G$
	such that for each $k, N \in \N$ there exists a $k$-compatible pair $(i_1, i_2)$
	with $i_1 \ge N$ such that for each $0 \le j \le k$ and $m \in \{1, 2\}$ we have
	$w^{(i_m)}_{k_{i_m} - j} = \varphi_j$ and $\W_{k_{i_m} + j} = \mathbf{S}^{(m)}_j$.
	Before proving the existence of such sequences, let us first show that this suffices to complete the proof.

	Since $h_1$ is $C^{1}$ and $i_2 - i_1 \le \# \text{Gap}_0 \times [\#(\F \cup \G)]^k$,
	for any $\vre > 0$ we have, for large enough $i_1$ (relative to $k$),
	$$|h_1^{i_2 - i_1}(x) - \gamma^{i_2 - i_1} x| < \vre |B_{i_2}|$$ for all $x \in B_{i_1}$.
	Indeed, there are finitely many such functions $h^{i_2 - i_1}$, given the bound on $i_2 - i_1$,
	and for each one we have that $\dfrac{d}{dx} \bigg\rvert_{x = 0} h^{i_2 - i_1} = \gamma^{i_2 - i_1}$.
	But for any function $g$ with $g(0) = 0$ and $g'(0) = D$ and any $\epsilon' > 0$ we have
	$|g(x) - Dx| < \epsilon' x$ for small enough $x$,
	by the definition of the derivative.
	Since $x \in B_{i_1}$ and the ratio $|B_{i_2}|/|B_{i_1}|$ is uniformly bounded due to the 
	bound on $i_2 - i_1$, the stated inequality follows.
	This implies that the Hausdorff distance between 
	$B_{i_2} \cap C(\h)$ and $\gamma^{i_2 - i_1} (B_{i_1} \cap C(\h))$ is at most $\epsilon |B_{i_2}|$,	
	and hence the Hausdorff distance between the rescalings
	$T_{\w^{(i_1)}} (B_{i_1} \cap C(\WW))$
	and $T_{\w^{(i_2)}}(B_{i_2} \cap C(\WW))$ is at most $\vre$.
	But these sets are $\Phi_{\w^{(i_1)}}(C(\W_{k_{i_1} + 1}, \dots))$
	and $\Phi_{\w^{(i_2)}}(C(\W_{k_{i_2} + 1}, \dots))$ respectively.
	By Lemma \ref{dist-lemma}, there exists a diffeomorphism $\Phi$ such that
	$\Phi_{\w^{(i_1)}}$ and $\Phi_{\w^{(i_2)}}$ are $\varepsilon$-close
	to $\Phi$ for large enough $k$.
	This implies that $\Phi(C(\W_{k_{i_1} + 1}, \dots))$ and $\Phi(C(\W_{k_{i_2} + 1}, \dots))$
	are $\varepsilon$-close for large $k$ and $i_1$.
	Since $\Phi$ is a diffeomorphism and $k$ and $i_1$ can be taken arbitrarily large by assumption,
	it follows that $C(\mathbf{S}^{(1)}) = C(\mathbf{S}^{(2)})$,
	which contradicts Lemma \ref{diff-start}.

	We now prove the existence of the desired `prefix' and `suffix' sequences.
	Note that we have already seen that there exist infinitely many $k$-compatible pairs for each $k \in \N$.
	The only remaining obstacle to extracting the desired sequences is the possibility
	that the tails $(W_{k_{i_1}}, \dots)$ and $(W_{k_{i_2}}, \dots)$ share very long initial strings,
	with length growing as $k$ does, so that the sequences $\mathbf{S}^{(1)}$ and $\mathbf{S}^{(2)}$
	would be equal. To prove that this is not the case, first
	observe that there exists $J \in \N$, depending only on $\F$, $\G$, and $\h$
	such that if $A$ is a gap of $C_n(\WW')$ for some $\WW' \in \{\F, \G\}^\N$ and $n \in \N$,
	then $h(A)$ is a gap of $C_{n+J}(\WW')$ for each $h \in \h$. 
	Suppose $\W_{k_{i_1} + j} = \W_{k_{i_2} + j}$
	for $1 \le j \le J$ and consider $h^{i_1 + 1}(I_0)$ and $h^{i_2 + 1}(I_0)$.
	There exist $w_{k_{i_1} + j}$ for $1\le j \le j_1$ (where $1 \le j_1 \le J$) and $A_{i_1}' \in \text{Gap}_0$
	such that $w_1 \circ \dots \circ w_{k_{i_1} + j_1}(A_{i_1}') = h_1^{i_1 + 1}(I_0)$.
	Similarly, there exist $w_{k_{i_2} + j}$ for $1\le j \le j_2$ (where $1 \le j_2 \le J$) and $A_{i_2}'$
	with $w_1 \circ \dots \circ w_{k_{i_2} + j_2}(A_{i_2}') = h_1^{i_2 + 1}(I_0)$.
	But for large enough $i_1$, 
	the Hausdorff distance between $h_1^{i_2 + 1}(I_0)$ and $\gamma^{i_2 - i_1} h_1^{i_1 + 1}(I_0)$
	is at most $\vre|B_{i_2}|$
	and hence the Hausdorff distance between
	$\Phi_{\w^{(i_1)}}(w_{k_{i_1}} \circ \dots \circ w_{k_{i_1} + j_1}(A_{i_1}'))$ and 
			$\Phi_{\w^{(i_2)}}(w_{k_{i_2}} \circ \dots \circ w_{k_{i_2} + j_2}(A_{i_2}'))$
	is at most $\vre$.		
	But by Lemma \ref{dist-lemma} and the fact that the diffeomorphisms in $\F$ have distinct images,
	as do the diffeomorphisms in $\G$, this implies that for sufficiently large $i_1$, we have
	$j_1 = j_2$, $A_{i_1}' = A_{i_2}'$ and
	$w^{(i_1)}_{k_{i_1} + j} = w^{(i_2)}_{k_{i_2} + j}$ for $1 \le j \le j_1$.
	Thus, $(i_1 + 1, i_2 + 1)$ is $k$-compatible. Continuing in this way, and using the fact that $\WW$ is
	not eventually periodic, we may assume that $\W_{k_{i_1} + j} \neq \W_{k_{i_2} + j}$ for some $1 \le j \le J$.
	Call such a pair $(k, J)$-compatible.
	
	Consider the sets $\Omega_k = ((\F \cup \G) \times \{\F, \G\}^{2})^k$. 
	We fix some order on the finite set $(\F \cup \G) \times \{\F, \G\}^{2}$ and put
	the associated lexicographic ordering on $\Omega_k$.
	For each $k$, let 
	$$\omega_k = (\varphi_0, \s^{(1)}_1, \s^{(2)}_1, \dots, \varphi_{k-1}, \s^{(1)}_k, \s^{(2)}_k)$$
	be the minimal element of $\Omega_k$ such that for infinitely many $(k,J)$-compatible
	pairs $(i_1, i_2)$, we have
	$$ w^{(i_1)}_{k_{i_1} - i} = w^{(i_2)}_{k_{i_2} - i} = \varphi_i \text{ for } 0 \le i < k$$
	and, for $m \in \{1,2\}$
	$$ \W_{k_m + j} = \s^{(m)}_j \text{ for } 1\le j \le k$$
	Notice that for any $k > J$ there exists $1 \le j \le J$ such that $\s^{(1)}_j \neq \s^{(2)}_j$.
	By construction each $\omega_{k}$ extends the previous one so we obtain
	infinite sequences $(\varphi_k)$, $\mathbf{S}^{(1)}$, and $\mathbf{S}^{(2)}$ with the desired properties.
	\end{proof}

\section{Proof of Theorem \ref{gen-bdd-deriv}}
Theorem \ref{gen-bdd-deriv} follows from Theorem \ref{gen-endpts} once we reduce to the case that
$\conv(F(I)) = \conv(G(I)) = I$, so it suffices to prove the following lemma.
\begin{lemma}
\label{preserve-endpoints}
Under the assumptions of Theorem \ref{gen-bdd-deriv},
either $\WW$ is eventually periodic or
there exist $C^{1+\alpha}$-diffeomorphic IFSs $\overline{\F}$ and $\overline{\G}$
defined on the same closed interval $I$ such that
\begin{itemize}
	\item $\conv(\overline{F}(I)) = \conv(\overline{G}(I)) = I$. 
	\item For any $\WW' \in \{\F, \G\}^\N$, $C(\WW') = C(\overline{\WW'})$, where
			$\overline{\W_i'} = \overline{\F}$ if $\W_i' = \F$ and $\overline{\W_i'} = \overline{\G}$ if $\W_i = \G$.
\end{itemize}
\end{lemma}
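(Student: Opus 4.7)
Translating and rescaling, put $I=[0,1]$, so $\F$ and $\G$ are symmetric about $1/2$. Write $a_\F = \min C(\F)$ and $a_\G = \min C(\G)$; by symmetry $\max C(\F) = 1-a_\F$ and $\max C(\G) = 1-a_\G$, and the leftmost maps $f_L\in\F$, $g_L\in\G$ satisfy $f_L(a_\F)=a_\F$ and $g_L(a_\G)=a_\G$ (from $a_\F=\min F(C(\F))=f_L(a_\F)$). Applied at the two endpoints, $f(C(\F))\subseteq C(\F)\subseteq[a_\F,1-a_\F]$ gives $f([a_\F,1-a_\F])\subseteq[a_\F,1-a_\F]$ for every $f\in\F$, and analogously for $\G$. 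I would then split into two cases.

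\emph{Case 1: $a_\F=a_\G=:a$.} Take $\overline{I}=[a,1-a]$ and set $\overline{\F}=\F|_{\overline{I}}$, $\overline{\G}=\G|_{\overline{I}}$. Since $f_L(a)=a$ and (by symmetry) the rightmost map of $\F$ fixes $1-a$, both extremes of $\overline{I}$ lie in $\overline{F}(\overline{I})$, giving $\conv(\overline{F}(\overline{I}))=\overline{I}$, and likewise for $\overline{\G}$. Because $f_L$ and $g_L$ share the fixed point $a$ with derivative $<1/2$, for any word $\WW'$ the composition of leftmost maps applied to $0$ converges geometrically to $a$, so $\min C(\WW')=a$ and $\max C(\WW')=1-a$; hence $C(\WW')\subseteq\overline{I}$, and the nested-intersection identity yields $C(\overline{\WW'})=\bigcap_n(W'_1\circ\cdots\circ W'_n)(\overline{I})=C(\WW')$.

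\emph{Case 2: $a_\F\neq a_\G$, say $a_\F>a_\G$.} No valid $\overline{\F},\overline{\G}$ on a common interval $\overline{I}$ can exist: $\overline{I}$ would have to contain $\conv(C(\G))=[a_\G,1-a_\G]$, yet a fixed-point-or-bounce analysis of the map of $\overline{\F}$ whose image meets $\min\overline{I}$ forces $\min\overline{I}\in C(\overline{\F})=C(\F)\subseteq[a_\F,1-a_\F]$, so $\overline{I}\subseteq[a_\F,1-a_\F]$, contradicting $a_\F>a_\G$. I would therefore show $\WW$ must be eventually periodic. Using the derivative bound together with $f_L(a_\F)=a_\F$ and $g_L(a_\G)=a_\G$, the mean value theorem gives the disjoint inclusions $g_L([a_\G,a_\F])\subseteq[a_\G,(a_\F+a_\G)/2)$ and $f_L([a_\G,a_\F])\subseteq((a_\F+a_\G)/2,a_\F]$. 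Since $\min C(\WW')$ lies in the image of $[a_\G,a_\F]$ under the leftmost map of $\W'_1$, the half of $[a_\G,a_\F]$ in which $\min C(\WW')$ lies determines $\W'_1$, and iterating determines all of $\WW'$. In particular $\WW'\mapsto C(\WW')$ is injective.

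Finally I would rerun the pigeonhole-plus-bounded-distortion argument of Theorem \ref{gen-endpts}, now anchored at the $h_L$-fixed point $m=\min C(\h)=\min C(\WW)$. Assuming $\WW$ is not eventually periodic, that argument produces distinct suffix sequences $\mathbf{S}^{(1)}\neq\mathbf{S}^{(2)}$ in $\{\F,\G\}^\N$ with $C(\mathbf{S}^{(1)})=C(\mathbf{S}^{(2)})$; in Theorem \ref{gen-endpts} this contradicts Lemma \ref{diff-start}, while here the injectivity just established forces $\mathbf{S}^{(1)}=\mathbf{S}^{(2)}$, a contradiction. Hence $\WW$ is eventually periodic. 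The main obstacle is precisely this Case 2 adaptation: Theorem \ref{gen-endpts}'s proof uses endpoint preservation both to identify gaps of $C(\WW)$ with iterated basic gaps and, at its conclusion, to invoke Lemma \ref{diff-start}; the adaptation requires care in isolating where this hypothesis enters, replacing Lemma \ref{diff-start} at the end by the new injectivity while verifying that the gap-identification step can be maintained in the present setting.
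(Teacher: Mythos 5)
Your Case 1 ($a_\F=a_\G$) is correct and is essentially the paper's own first step (the paper likewise restricts to $[\min C(\G),\max C(\G)]$ and is done when the minima coincide). Your Case 2, however, contains a genuine gap that you yourself flag but do not close. You propose to rerun the pigeonhole/bounded-distortion argument of Theorem \ref{gen-endpts}, replacing Lemma \ref{diff-start} at the end with the injectivity of $\WW'\mapsto\min C(\WW')$. The injectivity claim is fine, and it could indeed serve as a replacement for Lemma \ref{diff-start} \emph{if} the rest of the argument went through. But the rest of the argument does not go through: Theorem \ref{gen-endpts}'s proof begins by writing each gap $h_1^m(I_0)$ of $C(\h)=C(\WW)$ in the form $w^{(m)}_1\circ\cdots\circ w^{(m)}_{k_m}(A_m)$ with $A_m$ a stage-one gap, and this decomposition uses the hypothesis $\conv(F(I))=\conv(G(I))=I$ in an essential way (the paper explicitly remarks, just after Lemma \ref{diff-start}, that gaps of $C_k(\WW)$ need not be gaps of $C(\WW)$ once this hypothesis is dropped). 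In Case 2, after the reduction, $\G$ preserves the endpoints but $\F$ does not, so gaps of $C_n(\WW)$ can sit strictly inside larger gaps of $C(\WW)$ and the identification fails precisely where it is needed. Saying ``the adaptation requires care'' is correct but is not a proof.

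The paper's actual Case 2 argument is different in kind. It does not try to re-use the iterated-fixed-point decomposition at all. Instead it works with \emph{relative positions} of gaps: it shows that for $C(\h)$ the set $\pos(C(\h),\delta)\cap(0,M)$ is eventually confined to a small neighborhood of a fixed finite set $\{p_1,\dots,p_m\}$, while Claim \ref{init-str-claim} (which is where the symmetry hypothesis is used) gives two-sided quantitative bounds $\beta_k<\cdot<\gamma_k$ on how the relative position of the gap containing a fixed stage-one gap shifts when the tail of the word is changed at index $k$. The Morse--Hedlund theorem then supplies $\WW$-ambiguous strings of every length, and these are used to manufacture, at arbitrarily small scales, pairs of gaps of $C(\WW)$ whose relative positions differ by a nonzero amount smaller than $\gamma_k$, contradicting the confinement of $\pos(C(\h),\delta)$. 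None of this machinery appears in your proposal, so the proposal as written is incomplete. If you want to pursue your route, the thing to supply is a correct description of the gaps of $C(\WW)$ in the non-endpoint-preserving setting (e.g., as the gaps of $C(\WW')$ containing a stage-one gap, as in Claim \ref{init-str-claim}) and a verification that the pigeonhole and bounded-distortion steps survive this change; that is substantial additional work, not a routine adaptation.
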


To prove this lemma we will compare the relative positions of gaps of $C(\WW)$ to those of $C(\h)$.
Given a compact set $K$, we define the \emph{relative position} of a gap $A = (a,b)$ of $K$
to be $\pos_K(A) = \frac{a - \min K}{b-a}$.
We set
\[
\pos(K, \delta) = \{\pos_K(a,b) \colon (a,b) \text{ is a gap of } K \text{ with } |b-a| < \delta\}.
\] 

\begin{proof}
Without loss of generality, assume $I = [0,1]$ and $\min C(\G) \le \min C(\F)$.
For each $f\in \F$ and $g \in \G$, let $g^\ast = g|_{[\min C(\G), 1]}$ and $f^\ast = f|_{[\min C(\G), 1]}$.
Set $\G^\ast = \{g^\ast \colon g \in \G\}$ and $\F^\ast= \{f^\ast \colon f \in \F\}$,
which are well-defined IFSs on $[\min C(\G), 1]$.
Since 
$$\lim_{n \to \infty} w_1 \circ \dots \circ w_{i_n}(x) = \lim_{n \to \infty} w_1 \circ \dots \circ w_{i_n}(1)$$
for any sequence of functions $w_i \in \F \cup \G$ and any $x \in [0,1]$,
it follows that
for any $\WW' \in \{\F, \G\}^\N$, we have $C(\WW') = C(\WW^\ast)$, where $\W_i^\ast = \F^\ast$ if $\W_i' = \F$ and $\W_i^\ast = \G^\ast$ if $\W_i' = \G$.
If $\min C(\F^\ast) = \min C(\G^\ast)$ then by our symmetry assumption we have produced the desired IFSs.
Thus, after translating and rescaling we may assume 
(for a contradiction) that
$I = [0,1]$, $\min C(\G) = 0$, and $\min C(\F) = \vre \in (0,1/2)$.
Let $g_1 \in \G$ with $g_1(0) = 0$ and $f_1 \in \F$ with $f_1(0) \le \vre$.
Note that since $g_1'(x) \in (0,1/2)$ for all $x \in [0,1]$, we have
\begin{equation}
\label{g-1-bound}
g_1(x) < \frac12 x\text{ for all } x \in (0,1].
\end{equation}
Moreover, since $\vre = \min C(\F)$, we must have $f_1(\vre) = \vre$, so
since $f_1'(x) \in (0,1/2)$ for all $x\in [0,1]$, it follows
that 
\begin{equation}
\label{f-1-bound}
f_1(x) \ge \vre  - \frac12(\vre - x) = \frac12\vre + \frac12 x\text{ for all }x \in [0,\vre].
\end{equation}
We may also assume without loss of generality, as before,
that there exist $h_1, h_2 \in \h$ that fix $\min C(\h)$ and $\max C(\h)$ respectively.
We wish to apply Lemma \ref{dist-lemma} to conclude that
for any gap $A$ of $C(\h)$ the sequence $\pos_{C(\h)}(h_1^k(A))$ is Cauchy.
To that end, we first show that the restriction of $h_1$ to some neighborhood of $\min C(\h)$ is a contraction. 
Notice that, since $h_1'$ is continuous, there exists $t > \min C(\h)$ such that either
$h_1'(x) \le 1$ for all $x \in [\min C(\h),t]$ or $h_1'(x) \ge 1$ for all $x\in [\min C(\h), t]$.
In the former case, $h_1|_{[\min C(\h),t]}$ is
a contraction, so assume for a contradiction that the latter holds.
This implies $h_1[\min C(\h),t] \supset [\min C(\h), t]$, and hence 
$h_1^k[\min C(\h),1] \supset [\min C(\h),t]$ for all $k \in \N$.
It follows that $[\min C(\h),t] \subset C(\h)$. But since we have assumed that $f'(x) \in (0,1/2)$ for
every $f \in \F \cup \G$ and every $x \in I$ this clearly implies $C(\h) \neq C(\WW)$, a contradiction.
Now, if there does not exist $k \in \N$ such that $h_1^k[\min C(\h),1] \subset [\min C(\h),t]$, 
then again we have that
$[\min C(\h),t] \subset C(\h)$, yielding a contradiction, so for some large enough $k_0$ we have that 
$h_1^{k_0}(A) \subset [\min C(\h), t]$ and hence the sequence of sets 
$\pos_{C(\h)}(h_1^k(A)) = \pos_{C(\h)}(h_1^{k-k_0}(h_1^{k_0}(A)))$
is Cauchy by Lemma \ref{dist-lemma}.

Now if $M > 0$
then there is some $n \in \N$ such that any gap of $C(\h)$ 
with relative position at most $M$ must be of the form $h_1^\ell \circ h_{i_1} \circ \dots \circ h_{i_n}(a,b)$,
where $(a,b)$ is a gap of $C_1(\h)$ and $h_{i_j} \in \h$ for $1\le j \le n$.
Hence, since we have shown that $\pos_{C(\h)}(h_1^\ell(A))$ is Cauchy for any gap $A$, 
there exist $p_1, \dots, p_m > 0$ such that for any $\eta > 0$
there exists $\delta > 0$ such that 
\begin{equation}
\label{rel-pos}
\pos(C(\h), \delta) \cap (0,M) \subset \{p_1, \dots, p_m\}^{(\eta)},
\end{equation}
where $Z^{(\eta)}$ denotes the $\eta$-neighborhood of a set $Z$.

We are going to arrive at a contradiction by producing gaps of $C(\WW)$ that cannot all be gaps of
$C(\h)$ due to their distances from each other, combined wth (\ref{rel-pos}).
To do this we first show that if $\WW', \WW'' \in \{\F, \G\}^\N$ have a long initial string in common
then $C(\WW')$ and $C(\WW'')$ contain gaps with very close, but not equal, relative positions.
In particular, we prove the following claim.
\begin{claim}
\label{init-str-claim}
Let $\WW', \WW'' \in \{\F, \G\}^\N$ with $\W_1' = \W_1''$ and $\min C(\WW') > \min C(\WW'')$.
Let $A$ be a gap of $C_1(\WW') = C_1(\WW'')$ and let $A' = (a',b')$ and $A'' = (a'', b'')$ 
be the gaps of $C(\WW')$ and $C(\WW'')$ 
respectively that contain $A$. 
Then there exist $\beta_k, \gamma_k > 0$ depending only on $\F$ and $\G$ such that if
$k_0 = \min\{ k \in \N \colon \W_k' \neq \W_k''\}$, then 
\begin{eqnarray*}
& \beta_{k_0} < \min C(\WW') - \min C(\WW'') < \gamma_{k_0}\\
&\beta_{k_0} < a'' - a' < \gamma_{k_0}\\
&\beta_{k_0} < b' - b'' < \gamma_{k_0}\\
&\beta_{k_0} < \pos_{C(\WW'')}(A'') - \pos_{C(\WW')}(A') < \gamma_{k_0}
\end{eqnarray*}
\end{claim}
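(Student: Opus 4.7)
The plan is to strip off the common prefix of $\WW'$ and $\WW''$, reduce all four inequalities to a single estimate on the tails starting at position $k_0$, and transport them back through the prefix using the mean value theorem. Let $u_i \in \W_i = \W_i' = \W_i''$ for $1 \le i \le k_0 - 1$ denote the common prefix, and set $\alpha = \min C(\WW'_{\ge k_0})$, $\beta = \min C(\WW''_{\ge k_0})$. Since each $u_i$ is increasing, the hypothesis $\min C(\WW') > \min C(\WW'')$ is equivalent to $\alpha > \beta$.

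The heart of the argument is a uniform positive lower bound on $\alpha - \beta$. Let $M = \{\min C(\WW) : \WW \in \{\F,\G\}^\N\}$ and write $M_\F = f_1(M)$, $M_\G = g_1(M)$. A short induction shows $M \subset [0,\vre]$: since $f_1(\vre) = \vre$ and $g_1(\vre) < \vre/2$ by (\ref{g-1-bound}), both $f_1$ and $g_1$ send $[0,\vre]$ into itself, so iterating leftmost maps from $\vre$ stays in $[0,\vre]$. Hence $M_\F \subset [f_1(0), \vre]$ and $M_\G \subset [0, g_1(\vre)]$, and (\ref{f-1-bound}) together with (\ref{g-1-bound}) yields $f_1(0) \ge \vre/2 > g_1(\vre)$, separating these two sets by $d_1 := f_1(0) - g_1(\vre) > 0$. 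If we were in the case $\W_{k_0}' = \G$ and $\W_{k_0}'' = \F$ then $\alpha \in M_\G$, $\beta \in M_\F$ would force $\alpha < \beta$, contradicting the hypothesis; so necessarily $\W_{k_0}' = \F$, $\W_{k_0}'' = \G$, and $d_1 \le \alpha - \beta \le \vre$.

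Setting $c = \min_{w \in \F \cup \G,\, x \in I} w'(x) > 0$, the composition $T = u_1 \circ \dots \circ u_{k_0-1}$ has derivative in $[c^{k_0-1}, 2^{-(k_0-1)}]$ on $I$, and the mean value theorem gives
\[
\min C(\WW') - \min C(\WW'') = T(\alpha) - T(\beta) \in \bigl[c^{k_0-1} d_1,\; 2^{-(k_0-1)} \vre\bigr].
\]
The bound on $b' - b'' = v'(\min C(\WW'_{\ge 2})) - v'(\min C(\WW''_{\ge 2}))$ follows identically, with $v' \circ u_2 \circ \dots \circ u_{k_0-1}$ in place of $T$. For $a'' - a' = v(\max C(\WW''_{\ge 2})) - v(\max C(\WW'_{\ge 2}))$, the symmetry hypothesis on $\F$ and $\G$ forces every $C(\WW)$ to be symmetric about $1/2$ (reflection preserves each of $\F, \G$ and hence fixes $\WW$), so $\max C(\WW) = 1 - \min C(\WW)$ and the estimate reduces to the min-bound.

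For the last inequality, write $\delta_a = a'' - a'$, $\delta_b = b' - b''$, $\delta_m = \min C(\WW') - \min C(\WW'')$, $m' = \min C(\WW')$ and $m'' = \min C(\WW'')$. A direct rearrangement yields
\[
\pos_{C(\WW'')}(A'') - \pos_{C(\WW')}(A') = \frac{(a''-m'')(\delta_a + \delta_b) + (\delta_a + \delta_m)(b''-a'')}{(b''-a'')(b'-a')}.
\]
Since $a'' - m'' \ge 0$, the numerator is at least $(\delta_a + \delta_m)(b''-a'') \ge 2c^{k_0-1}d_1 \cdot \delta_0$ and at most $4 \cdot 2^{-(k_0-1)}$, where $\delta_0 > 0$ is the minimum length of a gap of $C_1(\F)$ or $C_1(\G)$; the denominator lies in $[\delta_0^2, 1]$. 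Taking the minimum of the four lower bounds and the maximum of the four upper bounds gives the required $\beta_{k_0}, \gamma_{k_0}$. The main obstacle is the separation estimate $f_1(0) > g_1(\vre)$, which is what keeps every lower bound strictly positive; once that is secured, the rest is mechanical.
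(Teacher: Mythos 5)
Your proof is correct and follows essentially the same route as the paper's. The paper also first shows $\W_{k_0}'=\F$, $\W_{k_0}''=\G$ by comparing the minima of the tails at position $k_0$ (using the same separation $f_1(0) > g_1(\vre)$ coming from (\ref{f-1-bound}) and (\ref{g-1-bound})), then propagates the resulting two-sided bound through the common prefix using the uniform derivative bounds $\lambda \le w' < 1/2$, and invokes the symmetry of $C(\WW)$ to convert the $a''-a'$ estimate into the already-established min-estimate. The only cosmetic difference is in the fourth inequality: you derive a single closed-form identity
\[
\pos_{C(\WW'')}(A'') - \pos_{C(\WW')}(A') = \frac{(a''-m'')(\delta_a + \delta_b) + (\delta_a + \delta_m)(b''-a'')}{(b''-a'')(b'-a')},
\]
which gives both bounds at once and makes positivity manifest, whereas the paper gets the lower bound by a direct comparison ($a''-m''> a'-m'$, $b''-a'' < b'-a' \le 1$) and the upper bound from the generic estimate $\frac{n_1}{d_1} - \frac{n_2}{d_2} \le \frac{n_1|d_2-d_1| + d_1|n_2-n_1|}{d_1 d_2}$; your identity is a mild streamlining. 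One small point worth keeping in mind (it affects the paper as well): by the time this claim is invoked, $\F$ and $\G$ have been restricted and rescaled, so the center of symmetry of the $C(\WW)$'s is the image of the original center, not automatically $1/2$; writing $\max C(\WW) = 2c - \min C(\WW)$ for the appropriate center $c$ fixes this and leaves the rest of the argument unchanged.
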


\begin{subproof}[Proof of Claim \ref{init-str-claim}]
First note that if $\W_{k_0}' = \F$ and $\W_{k_0}'' = \G$, then
letting $\varphi = w_1 \circ \dots \circ w_{k_0 - 1}$, where $w_i = f_1$ if $\W_i' = \F$
and $w_i = g_1$ if $\W_i' = \G$, we have
$\min C(\WW') \ge \varphi(f_1(0))$ and
 $\min C(\WW'') \le \varphi(g_1(\vre))$.
 But by (\ref{g-1-bound}) and (\ref{f-1-bound}), $f_1(0) \ge \frac12 \vre$ and $g_1(\vre) < \frac12 \vre$.
 Let $\beta_0 = f_1(0) - g_1(\vre) > 0$.
 Now, since $\F \cup \G$ is a finite set of diffeomorpisms defined on a compact set
 and the ranges of their derivatives lie in $(0,1/2)$,
 there exists $0 < \lambda < 1/2$ such that
 for any $w \in \F \cup \G$ and any $x, y \in [0,1]$,
 $$\lambda |y-x| < |w(y)-w(x)| < \frac{1}{2} |y-x|.$$ 
But then  
\begin{equation}
\label{double-bound-mins}
1/2^{k_0} > \varphi(1) - \varphi(0) \ge \min C(\WW') - \min C(\WW'') \ge \varphi(f_1(0)) - \varphi(g_1(\vre)) > \beta_0 \lambda^{k_0} 
\end{equation}
The same argument shows that $\W_{k_0}' = \G$ is impossible by assumption.
Thus, the first of our desired inequalities holds as long as $\beta_k \le \beta_0 \lambda^k$
and $\gamma_k \ge 2^{-k}$.

Now let $L' = \min C(\W_2', \W_3', \dots)$ and $L'' = \min C(\W_2'', \W_3'', \dots)$.
Also let $w_L, w_R \in \F \cup \G$ such that the gap 
$A = (a,b)$ of $C_1(\WW')$ sits between $w_L[0,1]$ and $w_R[0,1]$,
i.e. $a = w_L(1)$ and $b = w_R(0)$. We have $b' = w_R(L')$ and $b'' = w_R(L'')$.
Furthermore, since $\F$ and $\G$ are symmetric, it follows that
$\max C(\W_2', \W_3', \dots) = 1- L'$ and $\max C(\W_2'', \W_3'', \dots) = 1- L''$,
so $a' = w_L(1-L')$ and $a'' = w_L(1- L'')$.
Thus by our choice of $\lambda$ we have
\begin{eqnarray*}
	a'' - a' &=& w_L(1-L'') - w_L(1- L') < \frac{1}{2}\left((1-L'') - (1-L')\right) =  \frac{1}{2} (L' - L'')\\				
	&<& \frac{1}{2\lambda} (w_1(L') - w_1(L''))
		= \frac{1}{2\lambda} \left(\min C(\WW') - \min C(\WW'') \right)
\end{eqnarray*}
Arguing similarly, we obtain $a'' - a' > 2\lambda \left(\min C(\WW') - \min C(\WW'') \right)$, along
with identical bounds on $b' - b''$, so
our first three desired inequalities hold as long as
$\beta_k \le 2 \beta_0 \lambda^{k+1}$ and $\gamma_k \ge \dfrac{1}{2^{k+1}\lambda}$.

In particular, we have $a' < a''$ and $b' > b''$, so
\begin{eqnarray*}
\pos_{C(\WW'')}(A'') - \pos_{C(\WW')}(A')
&= \dfrac{a'' - \min C(\WW'')}{b'' - a''} -  \dfrac{a' - \min C(\WW')}{b' - a'}  \\
& > \min C(\WW') - \min C(\WW'').
\end{eqnarray*}
We wish to obtain a bound in the other direction as well, i.e. we wish to find $\gamma_0$
 such that
 \begin{equation}
 \label{initial-bound}  
 \dfrac{a'' - \min C(\WW'')}{b'' - a''} -  \dfrac{a' - \min C(\WW')}{b' - a'} 
<  \gamma_0 \left(\min C(\WW') - \min C(\WW'')\right).
\end{equation}
Note that for any $n_1, n_2, d_1, d_2 > 0$, we have
$$\dfrac{n_1}{d_1} - \dfrac{n_2}{d_2}= \dfrac{n_1 d_2 - n_2 d_1}{d_1d_2}
= \dfrac{n_1(d_2 - d_1) - d_1(n_2 - n_1)}{d_1d_2}
\le \dfrac{n_1|d_2 - d_1| + d_1|n_2 - n_1|}{d_1 d_2}.$$
Thus, since our numerators and denominators are all bounded above by $1$
and the denominators $b' - a'$ and $b'' - a''$ are bounded below by the smaller of the
minimal gap lengths of $F[0,1]$ and $G[0,1]$,
it will suffice to bound the difference between the numerators and the difference between the denominators.
Using our bounds on $a'' - a'$ and $b' - b''$ we have
$$|(a' - \min C(\WW')) - (a'' - \min C(\WW'')| < \left(1 + \frac{1}{2\lambda}\right) |\min C(\WW') - \min C(\WW'')|$$
and 
$$|(b' - a') - (b'' - a'')| < \frac{1}{\lambda} |\min C(\WW') - \min C(\WW'')|.$$
Thus, taking $\gamma_0 = \dfrac{2}{\lambda (b_0 - a_0)^2}$, where $(a_0,b_0)$ is 
of minimal length among all gaps of $F[0,1]$ and $G[0,1]$, we obtain (\ref{initial-bound}).
Letting $\beta_k = 2\beta_0\lambda^{k+1}$ and $\gamma_k = \gamma_0 /2^k$ we obtain 
all four desired inequalities.
\end{subproof}

We will use this claim to produce the desired gaps in $C(\WW)$ 
by finding long strings in $\WW$ which agree on exactly the right number of initial terms.
Say that a string $(\s_1, \dots, \s_k) \in \{\F, \G\}^k$ is \emph{$\WW$-ambiguous} if for any $N \in \N$, 
there exist $\ell, m \ge N$ with
$$(\W_{\ell}, \dots, \W_{\ell + k}) = (\s_1, \dots, \s_k, \F)$$
and
$$(\W_{m }, \dots, \W_{m + k}) = (\s_1, \dots, \s_k, \G).$$
Note that there are $\WW$-ambiguous strings of any length.
Indeed, suppose for a contradiction that for any string $\mathbf{S}$ of length $k$,
all occurrences of this string appearing late enough in $\WW$ are followed by the same symbol.
That is, suppose there exist
$N_\mathbf{S} \in \N$ and $\s_\mathbf{S} \in \{\F, \G\}$
such that for any $\ell \ge N_\mathbf{S}$
with $(\W_\ell, \dots, \W_{\ell + k - 1}) = \mathbf{S}$, we have $\W_{\ell + k} = \s_\mathbf{S}$.
Then letting 
$N = \max N_\mathbf{S}$ over all $\mathbf{S} \in \{\F, \G\}^k$ and setting
$\overline{\WW} = (\W_N, \W_{N+1}, \dots)$, we see that for any $k' \ge k$ we have
$P_{\overline{\WW}}(k') = P_{\overline{\WW}}(k)$.
(Here, $P_{\WW}$ is the complexity function $P_{\WW}(k) = \#\{(\s_1, \dots, \s_k) \colon (\W_{n+1}, \dots, \W_{n+k}) = (\s_1, \dots, \s_k)\text{ for some }n \in \N\}$.)
By the Morse-Hedlund Theorem (see \cite{MH}), any sequence with a bounded complexity function is
eventually periodic, and hence $\WW$ is eventually periodic,
contradicting our assumptions.

Let $\mathcal{A}_k$ be the set of $\WW$-ambiguous strings of length $k$.
Let $\s_1 \in \{\F, \G\}$ have the property that for infinitely many $k$, 
there exists $\mathbf{S} = (\s_k, \dots, \s_1) \in \mathcal{A}_k$.
Note that any tail of a $\WW$-ambiguous string is also $\WW$-ambiguous, so it follows
that for every $k \in \N$, there exists $\mathbf{S} = (\s_k, \dots, \s_1) \in \mathcal{A}_k$.
Inductively, if $\s_1, \dots, \s_m$ are chosen such that all $k \in \N$
there exists $\mathbf{s} = (\s_k, \dots, \s_m, \dots, \s_1) \in \mathcal{A}_k$,
choose $\s_{m+1}$ so that for all $k \in \N$,
there exists $\mathbf{s} = (\s_k, \dots, \s_{m+1}, \dots, \s_1) \in \mathcal{A}_k$.
We thus obtain 
a sequence $(\mathcal{S}_j)_{j \in \N}$
such that for each $k \in \N$, $(\mathcal{S}_k, \dots, \mathcal{S}_1)$ is a $\W$-ambiguous string.
\ignore{
Let $\varphi_i = f_1$ if $\W_i = \F$ and $\varphi_i = g_1$ if $\W_i = \G$,
and let 
$$\Phi_k = T^{(\varphi_1, \dots, \varphi_k)} \circ \varphi_1 \circ \dots \circ \varphi_k.$$
By Lemma \ref{dist-lemma}, if $(k_j)$ is a subsequence such that for any $\ell$
$$(\W_{k_j - \ell}, \dots, \W_{k_j}) = (\s_{\ell+1}, \dots, \s_1)$$
for all sufficiently large $j$, then
$\Phi_{k_j}$ converges to a diffeomorphism $\Phi$ of $[0,1]$.} 

To arrive at a contradiction to (\ref{rel-pos}), we first choose the constants $M$ and $\delta$.
The sequence space $\{f_1, g_1\}^\N$ with the usual topology is compact.
Furthermore, by Lemma \ref{dist-lemma}, for each $\w = (w_1, w_2, \dots)$ in this space there is a
diffeomorphism $\Phi_{\w} = \lim_{k \to \infty} \Phi_{(w_k, \dots, w_1)}$ and the mapping
$\w \mapsto \min_{x\in[0,1]} |\Phi_{\w}'(x)|$ is continuous, as is the corresponding function with $\min$
replaced by $\max$.
Thus, there exist $0 < \lambda_1 < \lambda_2$ such that
for any $\w \in \{f_1, g_1\}^\N$ and any $x \in [0,1]$, $\Phi_\w'(x) \in (\lambda_1, \lambda_2)$.
Fix $M$ large enough that there exist gaps of $C_1(\F)$ and $C_1(\G)$ with relative positions less than 
$\lambda_1 M/\lambda_2$.
Let $p_1, \dots, p_m$ be as in (\ref{rel-pos}) and 
choose $k$ large enough that $2\gamma_k < |p_i - p_j|$ for every $i \neq j$.
Choose $\delta$ small enough that (\ref{rel-pos}) holds with $\eta = \gamma_k/3$.
We are going to find limit points of $\pos(C(\WW), \delta) \cap (0,M)$
with distance less than $\gamma_k$, contradicting (\ref{rel-pos}).
For each $\ell \in \N$ we can find $i_1^{(\ell)}, i_2^{(\ell)} \in \N$ such that
$$(\W_{i_1^{(\ell)} - \ell}, \dots, \W_{i_1^{(\ell)} + k}) = (\mathcal{S}_{k + \ell}, \dots, \mathcal{S}_1, \F)$$
and
$$(\W_{i_2^{(\ell)} - \ell}, \dots, \W_{i_2^{(\ell)} + k}) = (\mathcal{S}_{k + \ell}, \dots, \mathcal{S}_1, \G).$$
Let $\w_m= (w_1, \dots, w_m)$
where $w_i = f_1$ if $\mathcal{W}_i = \F$ and $w_i = g_1$ if $\mathcal{W}_i = \G$,
and let $\Phi^{(\ell)}_1 = \Phi_{\w_{i^{(\ell)}_1-1}}$ and
$\Phi^{(\ell)}_2 = \Phi_{\w_{i^{(\ell)}_2-1}}$.
Now let $A^{(\ell)}$ be a gap of $C_1(\W_{i^{(\ell)}_1})$ with relative position less than 
$\lambda_1 M/\lambda_2$
and let $A_1^{(\ell)}$ and $A_2^{(\ell)}$ be the corresponding gaps of $C(\W_{i_1^{(\ell)}}, \dots)$ and $C(\W_{i_2^{(\ell)}}, \dots)$.
Then their relative positions are even smaller than that of $A^{(\ell)}$,
and by 
Claim \ref{init-str-claim} they satisfy
$\beta_k < |\pos_{C(\W_{i_1^{(\ell)}}, \dots)}(A_1^{(\ell)}) - \pos_{C(\W_{i_2^{(\ell)}}, \dots)}(A_2^{(\ell)})| < \gamma_k$.

Using the pigeonhole principle and moving to a subsequence we may also assume for any $j \in \N$ that 
for some fixed suffix sequences $\mathbf{S}_1$ and $\mathbf{S}_2$ we have
$(\W_{i_1^{(\ell)}+k + 1}, \dots, \W_{i_1^{(\ell)}+k + j}) = \mathbf{S}_1$
and
$(\W_{i_2^{(\ell)}+k + 1}, \dots, \W_{i_2^{(\ell)}+ k + j}) = \mathbf{S}_2$
for all sufficiently large $\ell \in \N$.
In particular, again applying Claim \ref{init-str-claim}
we may assume without loss of generality that the diameter of the set $\{A_1^{(\ell)} : \ell \ge \ell_0\}$
(with respect to the Hausdorff metric) is less than $1/\ell_0$, and similarly for $A^{(\ell)}_2$.
Therefore, setting $(a_1^{(\ell)}, b_1^{(\ell)}) = A_1^{(\ell)}$ and $(a_2^{(\ell)}, b_2^{(\ell)}) = A^{(\ell)}_2$,
we can let $a_1 = \lim a_1^{(\ell)}$, $a_2 = \lim a_2^{(\ell)}$, $b_1 = \lim b_1^{(\ell)}$, and $b_2 = \lim b_2^{(\ell)}$.
Similarly, we can assume without loss of generality that the limits
$c_1 = \lim \min C(\W_{i_1^{(\ell)}}, \dots)$ and $c_2 = \lim \min C(\W_{i_2^{(\ell)}}, \dots)$ exist.

Note that, applying Claim \ref{init-str-claim} once more,
we have that $a_1 - a_2$, $b_2 - b_1$, and $c_2 - c_1$ are either all positive or all negative.
Since $\W_{i^{(\ell)}_1 - j} = \W_{i^{(\ell)}_2 - j}$ for $1 \le j \le \ell$,
we have $\Phi \df \lim \Phi^{(\ell)}_1 = \lim \Phi^{(\ell)}_2$.
Since $\Phi$ is strictly increasing, we also have that
$\Phi(a_1) - \Phi(a_2)$, $\Phi(b_2) - \Phi(b_1)$, and $\Phi(c_2) - \Phi(c_1)$ are either all positive
or all negative and therefore
$$\left|\dfrac{\Phi(a_1) - \Phi(c_1)}{\Phi(b_1) - \Phi(a_1)} 
			- \dfrac{\Phi(a_2) - \Phi(c_2)}{\Phi(b_2) - \Phi(a_2)} \right| > 0.$$
But $\dfrac{\Phi_1^{(\ell)}(a_1^{(\ell)}) - \Phi_1^{(\ell)}(c_1^{(\ell)})}
							{\Phi_1^{(\ell)}(b_1^{(\ell)}) - \Phi_1^{(\ell)}(a_1^{(\ell)})}$
and
$\dfrac{\Phi_2^{(\ell)}(a_2^{(\ell)}) - \Phi_2^{(\ell)}(c_2^{(\ell)})}
							{\Phi_2^{(\ell)}(b_2^{(\ell)}) - \Phi_2^{(\ell)}(a_2^{(\ell)})}$
are relative positions of $C(\WW)$ 
which tend to $\dfrac{\Phi(a_1) - \Phi(c_1)}{\Phi(b_1) - \Phi(a_1)} $ and 
$\dfrac{\Phi(a_2) - \Phi(c_2)}{\Phi(b_2) - \Phi(a_2)} $ respectively.
Furthermore, since $i_1^{(\ell)}$ and $i_2^{(\ell)}$ clearly must tend to infinity,
the gaps $w_1 \circ \dots \circ w_{i^{(\ell)}_1 - 1}(A^{(\ell)}_1)$ and 
$w_1 \circ \dots \circ w_{i^{(\ell)}_2 - 1}(A^{(\ell)}_2)$ have length less than $\delta$
for sufficiently large $\ell$.
Lastly, by our choice of $\lambda_1$ and $\lambda_2$,
the relative positions of these gaps in $C(\W)$ are less than 
$\dfrac{\lambda_2}{\lambda_1}\left( \dfrac{\lambda_1 M}{\lambda_2} \right) = M$.
Thus, $\pos(C(\WW), \delta) \cap (0, M)$ contains two limit points with distance less than $\gamma_k$,
contradicting (\ref{rel-pos}).
\end{proof}

It is sometimes possible to determine that a set is not the limit set of an IFS by showing that its box-counting dimension differs from
its Hausdorff dimension. The next example shows that our methods above cover some cases where this dimension argument is not possible.

\begin{exa}
\label{dims-equal}
Let $0 < \rho, \vre < 1$ and $k \in \N$.
Let $\F$ and $\G$ be homogeneous IFSs with contraction ratio $\rho$ defined on an interval $I$
such that $F(I) = \bigcup_{n=1}^k I_n$ and $G(I) = \bigcup_{n=1}^k J_n$,
where $I_n$ and $J_n$ are closed intervals of length $\rho$.
Suppose that all of these intervals are $\vre$-separated, i.e. suppose that
for any $1 \le n \le m \le k$, we have $\dist(I_n, J_m) \ge \vre$
and that if $n < m$, we also have $\dist(I_n, I_m) \ge \vre$ and $\dist(J_n, J_m) \ge \vre$.
Then for any $\WW \in \{ \F, \G\}^\N$,
$$\dim_H(C(\WW)) = \underline{\dim}_B(C(\WW)) = \overline{\dim}_B(C(\WW)) 
= \dfrac{-\log k}{\phantom{-} \log \rho}.$$
\end{exa}

\begin{proof}
First note that for each $j \in \N$, we can cover $C(\WW)$ with $k^j$ intervals of length $\rho^j$. It follows
that
$$\dim_H C(\WW) \le \underline{\dim}_B C(\WW) \le \overline{\dim}_B C(\WW)
 \le \dfrac{-\log k}{\phantom{-} \log \rho}$$
(See for example Proposition 4.1 in \cite{F}.)
On the other hand, since the basic intervals at stage-$j$ are separated by gaps with length at
least $\rho^{j-1}\vre$, it follows from an application of the mass distribution principle that
$$\dim_H C(\WW) \ge \liminf_{j \to \infty} \dfrac{\log(k^{j-1})}{-\log(k\rho^{j-1} \vre)} 
= \dfrac{-\log k}{\phantom{-}\log \rho}.$$
(See Example 4.6 in \cite{F}.)
\end{proof}

Thus, if $\F$ and $\G$ satisfy the hypotheses of
Example \ref{dims-equal} as well as those of
either Theorem \ref{gen-bdd-deriv} or Theorem \ref{gen-endpts}, 
then $C(\WW)$ is not equal to $C(\h)$ for any $C^{1+\beta}$-diffeomorphic IFS $\h$ even though
the Hausdorff and box-counting dimensions of $C(\WW)$ agree.

\section{Further Questions}
\label{questions}
We remark that the symmetry assumption in Theorem \ref{gen-bdd-deriv} is used only in the proof of 
Claim \ref{init-str-claim}. It is therefore natural to ask whether by means of a different argument
this assumption can be removed.
\begin{question}
Does Theorem \ref{gen-bdd-deriv} hold if the symmetry assumption is removed?
\end{question}
Another natural direction is to increase the number of IFSs.
\begin{question}
Does an analog of Theorem \ref{gen-bdd-deriv} (or Theorem \ref{gen-endpts}) hold if we replace the two IFSs 
$\F$ and $\G$ with
	\begin{itemize}
		\item a finite number of IFSs $\F_1, \dots, \F_k$?
		\item a sequence of IFSs $\F_1, \F_2, \dots$?
	\end{itemize}
\end{question}

\bibliographystyle{alpha}

\end{document}